\newtheorem{Theorem}{Theorem}
\newtheorem{Proposition}{Proposition}
\newtheorem{cor}{Corollary}
\begin{document}
\title{Warped Product Conformal Kahler Manifolds and Kenmotsu Structures}
\titlerunning{Warped Product Conformal Kahler Manifolds}
\author{Hassan Attarchi}
\authorrunning{H. Attarchi}
\institute{H. Attarchi \at
              Department of Mathematics and Computer Science,\\
              Amirkabir University of Technology, Tehran, Iran. \\
              Tel.: +98-911-175-1603\\
              \email{hassan.attarchi@aut.ac.ir}}
\date{Received: date / Accepted: date}
\maketitle
\begin{abstract}
In this paper, the conditions on the warped product manifold of a Kenmotsu manifold and the real line are considered such that the warped manifold becomes a conformal Kahler manifold. This result demonstrates the close relation of Kenmotsu and Kahler manifolds.
\keywords{Warped product \and Kenmotsu manifold \and Conformal Kahler manifold}
\end{abstract}
\section{Introduction}
To study manifolds with negative curvature, Bishop and O'Neill introduced the notion of warped product as a generalization of Riemannian product~\cite{onil}. Afterwards, warped product was used in physical issues for modeling the standard space-time, especially for the neighborhoods of stars and black holes~\cite{onil2}. In 1960's and 1970's, when almost contact manifolds were studied as an odd dimensional counterpart of almost complex manifolds, the warped product was used to make examples of almost contact manifolds~\cite{blair72,tanno69}. There are different classifications of almost contact structures which one of the most significant classes is trans-sasakian manifolds. The trans-sasakian manifolds are divided into three subgroups named: Sasakian, Kenmotsu and Cosymplectic manifolds~\cite{munteanu}. In this classification, Kenmotsu manifolds are generated locally by warped product of a Kahler manifold and an interval of the real line $I\!\!R$~\cite{kenmotsu}.\par
Recently, G. Ganchev and V. Mihova in~\cite{ga&mi}, used warped product to generate Kahler manifolds of $\alpha$-Sasakian manifolds. First, they proved that the warped product manifold gets a complex structure. Then, they used a conformal change on the metric of warped manifold to make a Kahler one. In our work, a Kenmotsu manifold is used as underlying manifold of a warped product manifold and the possible structures of this warped product manifold are investigated.\par
Aiming at our purpose, we organize this paper as follows. In Section 2, the structure of an almost contact metric manifold $(M,\varphi,\eta,\xi,g)$ is studied and the definition of trans-sasakian, $\alpha$-Sasakian, $\beta$-Kenmotsu manifolds and some useful equations related to them are reviewed. In Section 3, we find out the conditions on warped product manifold of a Kenmotsu manifold and the real line to be a conformal Kahler manifold. Conversely, the conformal Kahler manifold $\bar{M}$ is supposed to be warped product of odd dimensional manifold $M$ and the real line $I\!\!R$. Then, the conditions which the odd dimensional manifold $M$ is an almost Kenmotsu manifold are investigated. These results demonstrate the close relation of Kenmotsu and Kahler manifolds.
\section{Preliminaries and Notations}
An \emph{almost contact metric structure} $(\varphi, \xi, \eta, g)$ on a (2m+1)-dimensional
manifold $M$ is defined by $\varphi\in\otimes_{1}^{1}(M)$, $\xi\in\chi(M)$
and $\eta\in\Lambda^{1}(M)$ satisfy the following properties
$$\varphi^2X=-X+\eta(X)\xi,\ \ \eta(\xi)=1,\ \ \eta(\varphi)=0,\ \ \varphi\xi=0,\ \ g(\varphi X,\varphi Y)=g(X,Y)-\eta(X)\eta(Y),$$
for all $X,Y\in\Gamma TM$~\cite{blair}. An almost contact metric manifold $M$ is called a \emph{trans-Sasakian}
manifold if it satisfies
\begin{equation}~\label{11}
(\nabla_{X}\varphi)Y=\alpha(g(X,Y)\xi-\eta(Y)X)+\beta(g(\varphi X,Y)\xi-\eta(Y)\varphi X),
\end{equation}
where $\nabla$ is Levi-Civita connection of Riemannian metric $g$ and $\alpha,\beta$ are
smooth functions on $M$. The almost contact metric manifold $M$ which satisfies~(\ref{11}) is called trans-Sasakian manifold of type $(\alpha,\beta)$. Note that trans-Sasakian manifolds of type $(\alpha,0)$ are $\alpha$-\emph{Sasakian} manifolds, trans-Sasakian manifolds of type $(0,\beta)$ are $\beta$-\emph{Kenmotsu} manifolds and trans-sasakian
manifolds of type $(0,0)$ are \emph{cosymplectic} manifolds~\cite{munteanu}. Let $M$ be a $\beta$-Kenmotsu manifold, Therefore, $M$ satisfies the followings
\begin{equation}~\label{1}
\begin{array}{l}
\ \ \ \ \ \ \ \ \ \ \ \ \ \ \ \ (\nabla_{X}\varphi)Y=\beta(g(\varphi X,Y)\xi-\eta(Y)\varphi X),\cr
\nabla_{X}\xi=\beta(X-\eta(X)\xi),\ \ \ \ \ (\nabla_{X}\eta)Y=\beta(g(X,Y)-\eta(X)\eta(Y)),
\end{array}
\end{equation}
for all $X,Y\in\Gamma TM$. Moreover, it is known that $\eta$ is closed (i.e. d$\eta$=0) on a $\beta$-Kenmotsu manifold. \par
The \emph{Kahler form} $\Omega_{g}$ is defined on an almost contact metric manifold as follows
$$\Omega_{g}(X,Y):=g(X,\varphi Y),$$
for all $X,Y\in\Gamma TM$. Let $(M,\varphi,\eta,\xi,g)$ be a $\beta$-Kenmotsu manifold, then Kahler form $\Omega_{g}$ satisfies the equation $d\Omega_{g}=\beta(\eta\wedge\Omega_{g})$~\cite{olzak}. Moreover, An almost contact manifold which holds the equations $d\Omega_{g}=\beta(\eta\wedge\Omega_{g})$ and $d\eta=0$ is called \emph{almost Kenmotsu} manifold in literatures~\cite{dileo}.\par
On the other hand, a Hermitian manifold $M$ with Kahler form $\Omega$ which satisfies $d\Omega=\omega\wedge\Omega$ for 1-form $\omega$ is called Locally Conformal Kahler (Conformal Kahler) manifold if $\omega$ is a closed (exact) form~\cite{13}.
\section{Warped product of Kenmotsu manifolds}
Let $(M,\varphi,\eta_{0},\xi_{0},g)$ be a $\beta_{0}$-Kenmotsu manifold and $I\!\!R$ be the real line with natural coordinate system $\bar{\xi}:=\frac{\partial}{\partial t}$ and $\bar{\eta}:=dt$. Consider the warped product manifold $\bar{M}=\mathbb{R}\times_{p}M$ with warp function $p:\mathbb{R}\longrightarrow\mathbb{R}$ and metric $G$ defined as follows
\begin{equation}~\label{met}
G:=\bar{\eta}\otimes\bar{\eta}+p^2(t)g.
\end{equation}
For all $t\in\mathbb{R}$, the manifold $(M_{t}=\{t\}\times M,\varphi,p(t)\eta_{0},\frac{\xi_{0}}{p(t)},p^2(t)g)$ is $\frac{\beta_{0}}{p(t)}$-Kenmotsu manifold, as well. To simplify equations on $\bar{M}$, we use the following notations
\begin{equation}~\label{k s 1}
\beta:=\frac{\beta_{0}}{p},\ \ \ \eta:=p\eta_{0},\ \ \ \xi:=\frac{\xi_{0}}{p}.
\end{equation}
The almost complex structure $J$ on $\bar{M}$ is defined as follows
\begin{equation}~\label{WPK9}
J|_{D}:=\varphi,\ \ \ J\xi:=-\bar{\xi},\ \ \ J\bar{\xi}:=\xi,
\end{equation}
where $D=\{X\in T\bar{M}|\ \eta(X)=\bar{\eta}(X)=0\}$ and it is called \emph{contact distribution} of Kenmotsu manifold $M$. From which any vector field $\bar{X}\in\Gamma T\bar{M}$ can be uniquely written as follows
\begin{equation}~\label{tajzie}
\bar{X}=X+\bar{\eta}(\bar{X})\bar{\xi}=X_D+\eta(X)\xi+\bar{\eta}(\bar{X})\bar{\xi},
\end{equation}
where $X_D\in\Gamma D$ and $X\in\Gamma TM$. Moreover, according to~(\ref{WPK9}) we obtain
\begin{equation}~\label{WPK8}
J\bar{X}=\varphi(X_D)+\bar{\eta}(\bar{X})\xi-\eta(X)\bar{\xi}.
\end{equation}
The equations~(\ref{k s 1})--(\ref{WPK8}) show that $(\bar{M},G,J)$ is an almost Hermitian manifold. In the following, we prove that the almost complex structure $J$ of almost Hermitian manifold $\bar{M}$ is integrable (i.e. its Neijenhuis tensor $N_J$ is vanished). Therefore, $(\bar{M},G,J)$ becomes a Hermitian manifold. Let $\nabla$ be the Levi-Civita connection on $(\bar{M},G,J)$, then
$$(\nabla_{\bar{X}}J)\bar{Y}=(\nabla_{X}J)Y+(\nabla_{X}J)(\bar{\eta}(\bar{Y})\bar{\xi})
+\bar{\eta}(\bar{X})(\nabla_{\bar{\xi}}J)Y+\bar{\eta}(\bar{X})(\nabla_{\bar{\xi}}J)
(\bar{\eta}(\bar{Y})\bar{\xi}),$$
for all $\bar{X},\bar{Y}\in\Gamma T\bar{M}$. By using some properties of Kenmotsu manifolds presented in Section 2, the components of above equation are calculated as follows
\begin{equation}~\label{WPK7}
\left\{
\begin{array}{l}
(\nabla_{X}J)Y=\beta g(\varphi X,Y)\xi-\beta\eta(Y)J(X)-\beta g(X,Y)\bar{\xi}-\eta(Y)(\bar{\xi}\ln p)X,\cr
(\nabla_{X}J)(\bar{\eta}(\bar{Y})\bar{\xi})=\beta\bar{\eta}(\bar{Y})X-\beta\bar{\eta}(\bar{Y})\eta(X)\xi-\bar{\eta}(\bar{Y})(\bar{\xi}\ln p)JX,\cr
\bar{\eta}(\bar{X})(\nabla_{\bar{\xi}}J)Y=\bar{\eta}(\bar{X})\eta(Y)(\bar{\xi}\ln p)\bar{\xi},\cr
\bar{\eta}(\bar{X})(\nabla_{\bar{\xi}}J)(\bar{\eta}(\bar{Y})\bar{\xi})=\bar{\eta}(\bar{X})\bar{\eta}(\bar{Y})(\bar{\xi}\ln p)\xi.
\end{array}
\right.
\end{equation}
The Neijenhuis tensor $N_J$ of almost complex structure $J$ can be written in terms of Levi-Civita connection as follows
\begin{equation}~\label{WpK11}
N_J(\bar{X},\bar{Y})=(\nabla_{J\bar{X}}J)\bar{Y}+J(\nabla_{\bar{Y}}J)\bar{X}
-J(\nabla_{\bar{X}}J)\bar{Y}-(\nabla_{J\bar{Y}}J)\bar{X}.
\end{equation}
Then, by inserting~(\ref{WPK8}) and (\ref{WPK7}) in~(\ref{WpK11}), we obtain that $N_J(\bar{X},\bar{Y})=0$,
Therefore, the almost complex structure $J$ on $\bar{M}$ is a complex one. The following proposition shows how far the 2-form $\Omega_G(.,.):=G(.,J.)$ on $\bar{M}$ is to Kahler structure.
\begin{Proposition}~\label{d omega}
Let $(\bar{M},G,J)$ be the warped product manifold introduced in this Section. then, the Kahler form $\Omega_{G}$ of Riemannian metric $G$ on $\bar{M}$ satisfies following equation
$$d\Omega_{G}=(2d(\ln p)+\beta_{0}\eta_{0})\wedge\Omega_{G}.$$
\end{Proposition}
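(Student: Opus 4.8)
The plan is to compute $d\Omega_G$ directly by splitting $T\bar M$ into the three pieces $D$, $\mathbb{R}\xi$, and $\mathbb{R}\bar\xi$ as in~(\ref{tajzie}), and to exploit the fact that on each slice $M_t$ we already know the Kenmotsu identity $d\Omega_g = \beta_0(\eta_0\wedge\Omega_g)$ from Lemma~\ref{prop}. First I would express $\Omega_G$ in terms of the data on $M$: since $G = \bar\eta\otimes\bar\eta + p^2 g$ and $J$ acts as $\varphi$ on $D$ while interchanging $\xi$ and $\bar\xi$ (up to sign), one gets $\Omega_G = p^2\Omega_g^{D} - p\,\eta_0\wedge\bar\eta$ where $\Omega_g^D$ is the restriction of $\Omega_g$ to the contact distribution, or more cleanly $\Omega_G = p^2\,\Omega_g + (\text{correction involving }\eta_0\wedge\bar\eta)$; the precise bookkeeping here is where one must be careful about how $\Omega_g$ pairs $\xi_0$ with $D$ (it should vanish there since $\varphi\xi_0=0$). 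Using the notational substitutions~(\ref{k s 1}), $\eta = p\eta_0$, $\beta = \beta_0/p$, I would rewrite everything in terms of $p$, $g$, $\eta_0$, $\beta_0$.

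Next I would apply $d$ termwise. The term $d(p^2\Omega_g) = 2p\,dp\wedge\Omega_g + p^2\,d\Omega_g = 2p\,dp\wedge\Omega_g + p^2\beta_0(\eta_0\wedge\Omega_g)$, using Lemma~\ref{prop}; note $dp = (\bar\xi\ln p)\,p\,\bar\eta$ up to the usual abuse, so $2p\,dp\wedge\Omega_g = 2p^2\,d(\ln p)\wedge\Omega_g$. The term coming from $\eta_0\wedge\bar\eta$ differentiates using $d\eta_0 = 0$ (valid on a $\beta_0$-Kenmotsu manifold, as noted in the preliminaries) and $d\bar\eta = 0$ (since $\bar\eta = dt$). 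Assembling these and factoring, the target identity $d\Omega_G = (2d(\ln p) + \beta_0\eta_0)\wedge\Omega_G$ should drop out after one recognizes $2d(\ln p)\wedge\Omega_G + \beta_0\eta_0\wedge\Omega_G$ reproduces exactly the two families of terms just computed; in particular the $\bar\eta$-components must cancel, which is the consistency check that the warp factor enters as $p^2$ rather than some other power.

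An alternative and probably cleaner route avoids writing $\Omega_G$ in coordinates: use $d\Omega_G(\bar X,\bar Y,\bar Z)$ expressed through the Levi-Civita connection via $(d\Omega_G)(\bar X,\bar Y,\bar Z) = (\nabla_{\bar X}\Omega_G)(\bar Y,\bar Z) + \text{cyclic}$, and then feed in the already-computed covariant-derivative formulas~(\ref{WPK7})--(\ref{WPK0}) for $(\nabla J)$, since $(\nabla_{\bar X}\Omega_G)(\bar Y,\bar Z) = G(\bar Y,(\nabla_{\bar X}J)\bar Z)$. Plugging~(\ref{WPK7})--(\ref{WPK0}) into this and cyclically summing, the $\beta$-terms organize into $\beta\eta(\cdot)\Omega_G(\cdot,\cdot)$-type expressions and the $(\bar\xi\ln p)$-terms into $d(\ln p)\wedge\Omega_G$-type expressions; converting $\beta = \beta_0/p$ and $\eta = p\eta_0$ back turns $\beta\eta$ into $\beta_0\eta_0$, and a factor $2$ appears from the symmetry of how $X$ and $\bar\eta(\bar Y)\bar\xi$ contributions pair up.

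The main obstacle I anticipate is purely organizational rather than conceptual: keeping straight the three-way decomposition and the scalings in~(\ref{k s 1}) so that the numerous terms in the cyclic sum actually collapse to the clean two-term right-hand side, and in particular verifying that all components along $\bar\xi$ (the $(\bar\xi\ln p)\bar\xi$ pieces in~(\ref{WPK5})) cancel in the antisymmetrization. No single step is deep — Lemma~\ref{prop}, $d\eta_0=0$, and $d\,dt=0$ do all the real work — but the cancellation must be checked carefully to confirm the coefficient $2$ on $d(\ln p)$ and the appearance of $\beta_0\eta_0$ (not $\beta\eta$ or $2\beta_0\eta_0$).
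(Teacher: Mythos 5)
Your primary route is exactly the paper's proof: write $\Omega_G = p^2\Omega_g + \eta\wedge\bar\eta$ (your sign hedge on the $\eta_0\wedge\bar\eta$ term resolves to $+p\,\eta_0\wedge\bar\eta$ under the convention $\Omega_G(\cdot,\cdot)=G(\cdot,J\cdot)$), differentiate using Lemma~\ref{prop}, $d\eta_0=0$, $d\bar\eta=0$ and $dp\propto\bar\eta$, and then absorb $p^2\Omega_g$ back into $\Omega_G$ via $(2d(\ln p)+\beta_0\eta_0)\wedge\eta\wedge\bar\eta=0$, which is precisely the cancellation of $\bar\eta$-components you flag. The alternative $\nabla J$ route you sketch is not needed; the proposal is correct and essentially identical to the paper's argument.
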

\proof A straightforward calculation shows\par
\centerline{$\Omega_{G}(\bar{X},\bar{Y})=G(\bar{X},J\bar{Y})=p^2g(X,\varphi Y+\bar{\eta}(\bar{Y})\xi)-\bar{\eta}(\bar{X})\eta(Y),$}
\centerline{$=p^2g(X,\varphi Y)+\eta(X)\bar{\eta}(\bar{Y})-\bar{\eta}(\bar{X})\eta(Y)=p^2g(X,\varphi Y)+(\eta\wedge\bar{\eta})(\bar{X},\bar{Y}).$}
Therefore,
\begin{equation}~\label{WPK2}
\Omega_{G}=p^2\Omega_{g}+\eta\wedge\bar{\eta},
\end{equation}
by using the facts $d\eta\wedge\bar{\eta}=0$ and $d\bar{\eta}=0$ the following is obtained
$$d\Omega_{G}=dp^2\Omega_{g}+p^2d\Omega_{g}+d\eta\wedge\bar{\eta}-\eta\wedge d\bar{\eta}=p^2(2d(\ln p)+\beta_0\eta_0)\wedge\Omega_{g}.$$
From~(\ref{WPK2}) and $(2d(\ln p)+\beta_0\eta_0)\wedge\eta\wedge\bar{\eta}=0$ the last equation can be written in the following form
\begin{equation}~\label{WPK1}
d\Omega_{G}=(2d(\ln p)+\beta_{0}\eta_{0})\wedge\Omega_{G},
\end{equation}
and this completes the proof.
$\Box$\par
\begin{Theorem}~\label{WPK4}
Let $(\bar{M},G,J)$ be the warped product manifold introduced in this Section. Then, $\bar{M}$ is a conformal Kahler manifold if and only if $\beta\eta$ is exact. Moreover, if $\beta\eta=-2du$ for some $u\in C^\infty(\bar{M})$, then $e^{2(u-\ln p)}G$ will be a Kahler metric on $\bar{M}$.
\end{Theorem}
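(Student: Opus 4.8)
The plan is to unwind the definition of Conformal Kahler (Definition~\ref{defn}) in light of the computation of $d\Omega_G$ carried out in Proposition~\ref{d omega}. Since $\bar M$ is already a Hermitian manifold (the integrability of $J$ was verified above), by definition it is Conformal Kahler precisely when the $1$-form $\omega$ appearing in $d\Omega_G = \omega\wedge\Omega_G$ is exact. The subtlety is that such an $\omega$ is not a priori unique, so I would first pin down the ambiguity: if $d\Omega_G = \omega\wedge\Omega_G = \omega'\wedge\Omega_G$, then $(\omega-\omega')\wedge\Omega_G=0$, and since $\Omega_G$ is the fundamental form of a Hermitian metric on a manifold of real dimension $\ge 4$, wedging with $\Omega_G$ is injective on $1$-forms (the Lefschetz-type map $\Lambda^1\to\Lambda^3$, $\gamma\mapsto\gamma\wedge\Omega_G$, has trivial kernel). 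Hence the Lee form $\omega$ is uniquely determined, and from Proposition~\ref{d omega} it equals $2\,d(\ln p)+\beta_0\eta_0$.

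Next I would observe that, by~(\ref{k s 1}), $\beta\eta = \frac{\beta_0}{p}\cdot p\,\eta_0 = \beta_0\eta_0$, so the Lee form is exactly $2\,d(\ln p)+\beta\eta$. Since $2\,d(\ln p)$ is manifestly exact, the Lee form $\omega$ is exact if and only if $\beta\eta=\beta_0\eta_0$ is exact. (For the ``only if'' direction one uses uniqueness of $\omega$: if $\bar M$ is Conformal Kahler, its Lee form must be exact, and that Lee form is forced to be $2d(\ln p)+\beta\eta$, whence $\beta\eta$ is exact.) This gives the stated equivalence.

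For the last assertion, suppose $\beta\eta=-2\,du$. Then the Lee form is $\omega = 2\,d(\ln p) - 2\,du = -2\,d(u-\ln p)$. I would then invoke the standard fact about (locally) conformal Kahler metrics: if $\Omega$ is the fundamental form of a Hermitian metric with $d\Omega=\omega\wedge\Omega$ and $\omega=df$, then $e^{-f}$ times the metric is Kahler; more precisely the conformally rescaled fundamental form $e^{-f}\Omega$ is closed. To keep the argument self-contained I would verify this directly: with $f=-2(u-\ln p)$ write $\tilde\Omega = e^{2(u-\ln p)}\Omega_G$ and compute
\begin{equation}~\label{check}
d\tilde\Omega = e^{2(u-\ln p)}\big(2\,d(u-\ln p)\wedge\Omega_G + d\Omega_G\big)
= e^{2(u-\ln p)}\big(2\,d(u-\ln p) + \omega\big)\wedge\Omega_G = 0,
\end{equation}
since $\omega = -2\,d(u-\ln p)$. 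As the conformal factor is positive, $e^{2(u-\ln p)}G$ is again a Hermitian metric with the same complex structure $J$, and its fundamental form $\tilde\Omega$ is closed, so it is a Kahler metric on $\bar M$.

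The computational core has essentially already been done in Proposition~\ref{d omega}, so the only genuine point requiring care is the uniqueness of the Lee form, i.e. the injectivity of $\gamma\mapsto\gamma\wedge\Omega_G$; without it the ``only if'' direction does not immediately follow, since an arbitrary $\omega$ realizing $d\Omega_G=\omega\wedge\Omega_G$ could in principle differ from $2d(\ln p)+\beta\eta$ by a $1$-form annihilated by $\Omega_G$. I expect that injectivity to be the main (though standard) obstacle; everything else is bookkeeping with~(\ref{k s 1}) and~(\ref{WPK1}).
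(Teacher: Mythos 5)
Your proof is correct and takes essentially the same route as the paper: it reads the Lee form $2d(\ln p)+\beta_{0}\eta_{0}=2d(\ln p)+\beta\eta$ off Proposition~\ref{d omega} and verifies by the same direct computation that $e^{2(u-\ln p)}\Omega_{G}$ is closed. The only difference is that you explicitly justify the ``only if'' direction by the injectivity of $\gamma\mapsto\gamma\wedge\Omega_{G}$ on $1$-forms (valid here since $\dim\bar{M}\geq 4$), a uniqueness point the paper passes over as obvious; this fills a small gap rather than changing the method.
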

\proof
From Proposition~\ref{d omega}, it is obvious that $\bar{M}$ is conformal Kahler if and only if $2d(\ln p)+\beta_{0}\eta_{0}$ is exact or equivalent to it $\beta\eta=\beta_{0}\eta_{0}$ is exact. Then, by a straightforward calculations, using Proposition~\ref{d omega} and $-2du=\beta\eta=\beta_{0}\eta_{0}$ the following is obtained\par
\centerline{$d\Omega_{\bar{G}}=d(e^{2(u-\ln p)})\wedge\Omega_{G}+e^{2(u-\ln p)}d\Omega_{G},$}
\centerline{$=e^{2(u-\ln p)}(2du-2d(\ln p))\wedge\Omega_{G}+e^{2(u-\ln p)}(2d(\ln p)+\beta_{0}\eta_{0})\wedge\Omega_{G}=0,$}
and this completes the proof.
$\Box$\par
\begin{cor}~\label{WPK12}
Let $(M,\varphi,\eta_{0},\xi_{0},g)$ be a $\beta_{0}$-Kenmotsu manifold. If $\beta_{0}\eta_{0}$ is exact (closed), then $\bar{M}=\mathbb{R}\times_{p}M$ is a (locally) conformal Kahler manifold. Especially, for the simple connected $1$-Kenmotsu manifold $M$, the warped manifold $\bar{M}=\mathbb{R}\times_{p}M$ is a conformal Kahler manifold.
\end{cor}
By considering the above Theorem and Corollaries, it is proved that there are Kahler and Kenmotsu manifolds of any dimensions. We start with $\mathbb{R}^2$ and its natural Kahler structure. There is a $3$-dimension simply connected Kenmotsu manifold $M=\mathbb{R}\times_{f}\mathbb{R}^2$ where $f(t)=ce^t$~\cite{blair,kenmotsu}. Then $\mathbb{R}\times_{p}M$ is a simply connected conformal Kahler manifold by Corollary~\ref{WPK12} and consequently we have a Kahler manifold of dimension $4$. Continuing the current method, we produce Kahler and Kenmotsu manifolds of any dimensions $n\geq2$.\par
To complete the previous discussion, we go into a converse problem. It means that in the warped product conformal Kahler manifold $\bar{M}=\mathbb{R}\times_{p}M$ with metric $G$, when manifold $M$ is a Kenmotsu one. In~\cite{tashiro}, it is proved that $M$ is an almost contact manifold as an orientable hypersurface of $\bar{M}$. Using the previous notations, we suppose that the almost contact structure of $M$ is given by $(\varphi,\eta_0,\xi_0,g)$. Let $\Omega_G$ be Kahler form of $\bar{M}$, since $\bar{M}$ is considered to be conformal Kahler then there is the smooth function $f:\bar{M}\longrightarrow\mathbb{R}$ such that $f\Omega_G$ is closed. Therefore,
\begin{equation}~\label{d omega G}
0=d(f\Omega_G)=pdf\wedge\eta_0\wedge\bar{\eta}+pfd\eta_0\wedge\bar{\eta}+p^2df\wedge\Omega_g+2fpdp\wedge\Omega_g+p^2fd\Omega_g,
\end{equation}
and the followings are obtained
\begin{cor}~\label{lem}
Let $\bar{M}$ be a conformal Kahler manifold furnished by the above structure. If $df=\xi_0(f)\eta_0$ and $dp=0$, then $M$ is an almost Kenmotsu manifold.
\end{cor}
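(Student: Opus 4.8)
The plan is to start from equation~(\ref{d omega G}), which expresses the closedness of $f\Omega_G$ as the vanishing of a sum of five terms. Under the hypotheses $df=\xi_0(f)\eta_0$ and $dp=0$, the first term $pdf\wedge\eta_0\wedge\bar\eta$ vanishes because $df$ is proportional to $\eta_0$, and the terms $p^2df\wedge\Omega_g$ and $2fpdp\wedge\Omega_g$ both simplify: the latter is identically zero since $dp=0$, while the former becomes $p^2\xi_0(f)\,\eta_0\wedge\Omega_g$. So~(\ref{d omega G}) collapses to
$$0 = pf\,d\eta_0\wedge\bar\eta + p^2\xi_0(f)\,\eta_0\wedge\Omega_g + p^2 f\,d\Omega_g.$$
The next step is to separate this identity according to the $\bar\eta$-component and the component purely along $M$. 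The term $pf\,d\eta_0\wedge\bar\eta$ is the only one containing $\bar\eta$ (recall $\Omega_g$ and $\eta_0$ are pulled back from $M$ and contain no $\bar\eta$ factor), so its vanishing forces $f\,d\eta_0=0$ wherever $f\neq0$, i.e. $d\eta_0=0$. Feeding this back leaves $p^2\xi_0(f)\,\eta_0\wedge\Omega_g + p^2 f\,d\Omega_g = 0$, that is,
$$d\Omega_g = -\frac{\xi_0(f)}{f}\,\eta_0\wedge\Omega_g = \left(\frac{-\xi_0(f)}{f}\right)\eta_0\wedge\Omega_g.$$

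Writing $\beta_0 := -\xi_0(f)/f$, this says precisely that $d\Omega_g = \beta_0\,\eta_0\wedge\Omega_g$ together with $d\eta_0=0$, which by the definition recalled in Section~2 (the equations $d\Omega_g=\beta(\eta\wedge\Omega_g)$ and $d\eta=0$) is exactly the statement that $M$, with its induced almost contact metric structure $(\varphi,\eta_0,\xi_0,g)$, is an almost Kenmotsu manifold. I would also note $\beta_0$ is a well-defined smooth function on the open set where $f\neq0$; since $\Omega_G$ is nondegenerate and $f$ was chosen so that $f\Omega_G$ is a genuine (nonzero) closed form, $f$ is nowhere zero, so $\beta_0$ is globally defined on $M$.

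The main obstacle — really the only subtle point — is the bookkeeping that lets one split~(\ref{d omega G}) into independent pieces: one must be careful that on the warped product $\bar M=\mathbb{R}\times_p M$ the forms $\eta_0$, $\Omega_g$, $d\eta_0$, $d\Omega_g$ are all pullbacks from the $M$-factor and hence annihilate $\bar\xi=\partial/\partial t$, so that a relation of the form $\alpha\wedge\bar\eta + \gamma = 0$ with $\alpha,\gamma$ horizontal forces $\alpha=0$ and $\gamma=0$ separately. Once that is granted the rest is the short algebraic manipulation above. A secondary point worth a sentence is justifying that $\varphi$, $\xi_0$, $\eta_0$ as described really do furnish $M$ with an almost contact metric structure compatible with $g$ — but this is supplied by the cited construction of~\cite{tashiro} and the setup already laid out before the corollary, so I would simply invoke it rather than reprove it.
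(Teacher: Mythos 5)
Your proposal is correct and follows essentially the same route as the paper's own proof: substitute the hypotheses into~(\ref{d omega G}), split the resulting identity into its $\bar{\eta}$-component and the purely horizontal part to get $d\eta_0=0$ and $d\Omega_g=-\xi_0(\ln f)\,\eta_0\wedge\Omega_g$, and recognize this as the almost Kenmotsu condition. Your added remarks on why the splitting is legitimate and why $f$ is nowhere zero are points the paper leaves implicit, but they do not change the argument.
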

\proof
Inserting $df=\xi_0(f)\eta_0$ and $dp=0$ in~(\ref{d omega G}), the following is obtained
\begin{equation}
\begin{array}{l}
pfd\eta_0\wedge\bar{\eta}+p^2\xi_0(f)\eta_0\wedge\Omega_g+p^2fd\Omega_g=0,\cr
\Longrightarrow pfd\eta_0\wedge\bar{\eta}=0 \ \& \ p^2(\xi_0(f)\eta_0\wedge\Omega_g+fd\Omega_g)=0,\cr
\Longrightarrow d\eta_0=0 \ \& \ d\Omega_g=-\xi(\ln f)\eta_0\wedge\Omega_g.
\end{array}
\end{equation}
The last equations shows that $M$ is an almost $-\xi(\ln f)$-Kenmotsu manifold and proof is complete.
$\Box$\par
\begin{cor}~\label{lem1}
Let $\bar{M}$ be a conformal Kahler manifold furnished by the above structure. If $df=\frac{\partial f}{\partial t}\bar{\eta}$, then $M$ is a contact manifold.
\end{cor}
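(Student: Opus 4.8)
The plan is to substitute the hypothesis $df=\frac{\partial f}{\partial t}\bar\eta$ directly into the closedness identity~(\ref{d omega G}) and then read off the consequences by separating the resulting $3$-form equation into the part carrying the factor $dt=\bar\eta$ and the part that does not. First note that, since $\bar\eta\wedge\bar\eta=0$, the term $p\,df\wedge\eta_0\wedge\bar\eta=p\frac{\partial f}{\partial t}\,\bar\eta\wedge\eta_0\wedge\bar\eta$ drops out. Because $p$ depends only on $t$ we also have $dp=p'(t)\bar\eta$, so $2fp\,dp\wedge\Omega_g=2fpp'\,\bar\eta\wedge\Omega_g$ and $p^2\,df\wedge\Omega_g=p^2\frac{\partial f}{\partial t}\,\bar\eta\wedge\Omega_g$; these two, together with $pf\,d\eta_0\wedge\bar\eta$, all carry the factor $\bar\eta$, whereas $p^2f\,d\Omega_g$ is the pullback of a $3$-form from $M$ and carries no $dt$. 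Hence~(\ref{d omega G}) splits accordingly.

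The $dt$-free part gives $p^2f\,d\Omega_g=0$, hence $d\Omega_g=0$. In the remaining part I would collect the three terms (using $\alpha\wedge\bar\eta=\bar\eta\wedge\alpha$ for any $2$-form $\alpha$, so each is of the form $\bar\eta\wedge(\cdot)$) and invoke that wedging with $dt$ is injective on forms pulled back from $M$; this yields $pf\,d\eta_0+\bigl(p^2\frac{\partial f}{\partial t}+2fpp'\bigr)\Omega_g=0$, i.e.
$$d\eta_0=-\Bigl(\frac{p}{f}\,\frac{\partial f}{\partial t}+2p'\Bigr)\Omega_g=:\mu\,\Omega_g .$$
Now the hypothesis $df=\frac{\partial f}{\partial t}\bar\eta$ says exactly that $df$ has no component along $M$, so $f$ is constant on each slice $\{t\}\times M$ and (on each connected component) $f$ depends only on $t$; thus $\mu$ is a function of $t$ alone. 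Since $d\eta_0$ and $\Omega_g$ are pulled back from $M$, evaluating the displayed identity on a pair of vectors in $\ker\eta_0$ at which $\Omega_g\ne 0$ forces $\mu$ to be constant. Therefore $d\eta_0=\mu\,\Omega_g$ with $\mu$ constant, and raising to the $m$-th exterior power gives $\eta_0\wedge(d\eta_0)^m=\mu^m\,\eta_0\wedge\Omega_g^m$. As on every almost contact metric manifold $\Omega_g$ is nondegenerate on $\ker\eta_0$ and $\eta_0(\xi_0)=1$, the $(2m+1)$-form $\eta_0\wedge\Omega_g^m$ is a nonzero multiple of the Riemannian volume form, hence nowhere vanishing; so $\eta_0\wedge(d\eta_0)^m\ne 0$ and $\eta_0$ is a contact form on $M$.

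The main obstacle is the very last point: one must exclude the degenerate possibility $\mu=0$ (equivalently $f=c\,p^{-2}$ for a constant $c$), in which $d\eta_0=0$ and $M$ is not contact; so the statement is to be read under the tacit non-degeneracy assumption $\mu\ne 0$, i.e. that the conformal Kähler structure is genuinely non-Kähler in the $\mathbb{R}$-direction. Everything else is just bookkeeping of $1$- and $2$-forms on the two factors of the warped product together with the injectivity of $dt\wedge(\cdot)$ on pullbacks from $M$.
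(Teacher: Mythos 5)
Your argument follows the same route as the paper's: substitute $df=\frac{\partial f}{\partial t}\bar\eta$ into~(\ref{d omega G}) and split the resulting identity into its $dt$-free part, which gives $d\Omega_g=0$, and its $\bar\eta$-part, which gives $d\eta_0$ proportional to $\Omega_g$, whence the contact condition $\eta_0\wedge(d\eta_0)^m\neq0$. You are in fact more careful than the paper at the final step: you keep the correct sign (since $d\eta_0\wedge\bar\eta=\bar\eta\wedge d\eta_0$, the proportionality factor is $\mu=-\bigl(\frac{p}{f}\frac{\partial f}{\partial t}+2p'\bigr)$, whereas the paper writes the identity without the minus), you justify that $\mu$ is constant by comparing a $t$-dependent coefficient with forms pulled back from $M$, and you note that the contact conclusion needs $\mu\neq0$. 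The paper instead passes directly from the proportionality to the normalization $\Omega_g=d\eta_0$ together with the scalar relation $pf=p^2\frac{\partial f}{\partial t}+2fp\frac{\partial p}{\partial t}$, which does not follow from the displayed equation and tacitly excludes the degenerate case $f=c\,p^{-2}$, where $d\eta_0=0$ and $M$ is cosymplectic rather than contact; so the caveat you flag is a genuine gap in the corollary as stated (and in the paper's own proof), not a defect of your argument.
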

\proof
From~(\ref{d omega G}), it is obtained
\begin{equation}
\begin{array}{l}
pfd\eta_0\wedge\bar{\eta}+p^2\frac{\partial f}{\partial t}\bar{\eta}\wedge\Omega_g+2fp\frac{\partial p}{\partial t}\bar{\eta}\wedge\Omega_g+p^2fd\Omega_g=0,\cr
\Longrightarrow d\Omega_g=0 \ \& \ pf\bar{\eta}\wedge d\eta_0=(p^2\frac{\partial f}{\partial t}+2fp\frac{\partial p}{\partial t})\bar{\eta}\wedge\Omega_g,\cr
\Longrightarrow pf=p^2\frac{\partial f}{\partial t}+2fp\frac{\partial p}{\partial t}=\frac{\partial p^2f}{\partial t}\ \ \ \ \& \ \ \ \ \Omega_g=d\eta_0,
\end{array}
\end{equation}
and $\Omega_g=d\eta_0$ shows that $M$ is a contact manifold.
$\Box$\par
\begin{acknowledgements}
The author would like to thank Iran National Science Fundation (INSF), for financial support on this work under grant no. ????????.
\end{acknowledgements}

\end{document}